\documentclass[11pt]{amsart}

\newcommand*{\LatexDef}{.}
\usepackage{kpfonts}

\usepackage{amsthm}

\usepackage{amsmath}

\usepackage{amssymb}

\usepackage{dsfont} 

\usepackage{thmtools}






\usepackage{cancel}

\usepackage{verbatim}


\usepackage[shortlabels]{enumitem}
\setlist[enumerate]{leftmargin=*,font=\upshape,align=parleft,label=(\alph*)}
\setlist[itemize]{leftmargin=*,labelwidth=*}



\usepackage{graphicx}
\usepackage{float}



\usepackage{color}

\usepackage{xspace}


\usepackage{calrsfs}

\DeclareSymbolFont{defaultmathcal}{OMS}{zplm}{m}{n}
\DeclareSymbolFontAlphabet{\mathcal}{defaultmathcal}

\DeclareSymbolFont{handwritten}{OMS}{rsfs}{m}{n}
\DeclareSymbolFontAlphabet{\handcal}{handwritten}

\usepackage[alphabetic, nobysame]{amsrefs}

\definecolor{darkgreen}{RGB}{54,124,50}
\usepackage{hyperref}
\hypersetup{
	pdfstartview={XYZ null null 1.00}, 
	pdfpagemode=UseNone, 
	colorlinks,
	breaklinks,
	linkcolor=blue,
	urlcolor=blue, 
	anchorcolor=blue,
	citecolor=darkgreen,
}

\usepackage[capitalise]{cleveref}



\usepackage{xparse} 

%
%

\usepackage{geometry}
\geometry{margin = 1in}

\usepackage{microtype}





\ExplSyntaxOn
\NewDocumentCommand{\RN}{m}
{
	\textup{ \int_to_Roman:n { #1 } }
}
\NewDocumentCommand{\rn}{m}
{
	\textup{ \int_to_roman:n { #1 } }
}
\ExplSyntaxOff


\newcommand{\N}{\mathbb{N}}
\newcommand{\Z}{\mathbb{Z}}

\newcommand{\R}{\mathbb{R}}

\newcommand{\0}{\mathbb{\emptyset}}
\newcommand{\w}{\infty}

\newcommand{\1}{\mathds{1}} 


\newcommand\de{\delta}
\newcommand{\e}{\varepsilon}

\newcommand\si{\sigma}

\renewcommand{\phi}{\varphi}

\DeclareRobustCommand{\rchi}{{\mathpalette\irchi\relax}}
\newcommand{\irchi}[2]{\raisebox{\depth}{$#1\cchi$}}
\let\cchi\chi
\let\chi\rchi





\newcommand{\dom}{\operatorname{dom}}




\makeatletter
\def\moverlay{\mathpalette\mov@rlay}

\def\mov@rlay#1#2{\leavevmode\vtop{%
		\baselineskip\z@skip \lineskiplimit-\maxdimen
		\ialign{\hfil$\m@th#1##$\hfil\cr#2\crcr}}}

\newcommand{\charfusion}[3][\mathord]{
	#1{\ifx#1\mathop\vphantom{#2}\fi
		\mathpalette\mov@rlay{#2\cr#3}
	}
	\ifx#1\mathop\expandafter\displaylimits\fi}
\makeatother


\renewcommand{\le}{\leqslant}

\renewcommand{\ge}{\geqslant}

\newcommand{\shortiff}{\Leftrightarrow}

\newcommand*{\defeq}{\mathrel{\vcenter{\baselineskip0.5ex \lineskiplimit0pt \hbox{\scriptsize.}\hbox{\scriptsize.}}}=}
\newcommand*{\eqdef}{=\mathrel{\vcenter{\baselineskip0.5ex \lineskiplimit0pt \hbox{\scriptsize.}\hbox{\scriptsize.}}}}

\newcommand*{\defequiv}{\mathrel{\vcenter{\baselineskip0.5ex \lineskiplimit0pt
			\hbox{\scriptsize.}\hbox{\scriptsize.}}}\shortiff}








\DeclareMathSymbol{\lqm}{\mathord}{operators}{``}
\DeclareMathSymbol{\rqm}{\mathord}{operators}{`'}





\newcommand{\set}[1]{\left\{ #1 \right\}}
\newcommand{\setbig}[1]{\big\{ #1 \big\}}

\newcommand{\clintbig}[1]{\big[ #1 \big]}







\theoremstyle{plain}

\newtheorem{theorem}[equation]{Theorem}
\newtheorem*{theorem*}{Theorem}


\makeatletter
\def\@empty{}
\def\ifemptycredit#1{%
	\def\tmp{#1}%
	\ifx\tmp\@empty%
	\else%
	{~(#1)}%
	\fi%
}
\makeatother

\newenvironment{namedthm*}[2][]{
	\par\medskip\noindent \textbf{#2}\ifemptycredit{#1}\textbf{.}\itshape\xspace
}{}

\crefname{prop}{Proposition}{Propositions}

\newtheorem*{propo*}{Proposition}

\crefname{property}{Property}{Properties}

\newtheorem*{property*}{Property}

\newtheorem{lemma}[equation]{Lemma}
\newtheorem*{lemma*}{Lemma}

\crefname{claimlemma}{Claim}{Claims}

\crefname{cor}{Corollary}{Corollaries}

\newtheorem*{cor*}{Corollary}

\crefname{obs}{Observation}{Observations}

\newtheorem*{obs*}{Observation}

\crefname{obss}{Observations}{Observations}

\newtheorem{obss*}{Observations}

\crefname{fact}{Fact}{Facts}

\newtheorem*{fact*}{Fact}



\theoremstyle{definition}

\crefname{defn}{Definition}{Definitions}

\newtheorem*{defn*}{Definition}

\newenvironment{defn**}[1][]{\par\medskip\noindent \textbf{Definition\xspace#1.}\xspace}{}

\crefname{question}{Question}{Questions}

\newtheorem*{question*}{Question}

\crefname{conj}{Conjecture}{Conjectures}

\newtheorem*{conj*}{Conjecture}

\crefname{example}{Example}{Examples}

\newtheorem*{example*}{Example}

\crefname{examples.plain}{Examples}{Examples}
\newtheorem{examples.plain}[equation]{Examples}
\newtheorem*{examples.plain*}{Examples}


\theoremstyle{remark}

\crefname{remark}{Remark}{Remarks}
\newtheorem{remark}[equation]{Remark}
\newtheorem*{remark*}{Remark}

\newenvironment{remarklike*}[2][]{\par\medskip\noindent \textit{#2}#1\textbf{.}\rmfamily\xspace}{\smallskip}

\crefname{claim+}{Claim}{Claims}
\newtheorem{claim+}[equation]{Claim}

\crefname{claim}{Claim}{Claims}

\newtheorem*{claim*}{Claim}

\crefname{subclaim}{Subclaim}{Subclaims}

\newtheorem*{subclaim*}{Subclaim}

\newenvironment{case*}[1]{\smallskip\par\noindent \textit{Case}:~#1.\rmfamily}{}

\crefname{notation}{Notation}{Notations}

\newtheorem*{notation*}{Notation}

\newtheorem*{terminology*}{Terminology}

\crefname{convention}{Convention}{Conventions}

\newtheorem*{convention*}{Convention}
\newtheorem*{conventions*}{Conventions}

\crefname{spec}{Speculation}{Speculations}

\newtheorem*{spec*}{Speculation}

\crefname{caution}{Caution}{Cautions}

\newtheorem*{caution*}{Caution}

\crefname{hypothesis}{Hypothesis}{Hypotheses}

\newtheorem*{hypothesis*}{Hypothesis}

\crefname{assumption}{Assumption}{Assumptions}

\newtheorem*{assumption*}{Assumption}

\newcommand{\fntsz}[1][11]{\fontsize{#1}{#1}\selectfont}



\crefname{examples}{Examples}{Examples}

\newenvironment{examples*}[1][\alph*]
{
	\refstepcounter{equation}
	\medskip
	\noindent\textbf{Examples.}
	\medskip
	\begin{enumerate}[\bfseries(\theequation.#1),ref=(\theequation.#1),itemsep=5pt]
}
{
	\end{enumerate}
	\smallskip
}


\theoremstyle{remark}



\declaretheoremstyle[
spaceabove=\topsep, 
spacebelow=6pt,
headfont=\normalfont\itshape,
notefont=\normalfont, notebraces={(}{)},
bodyfont=\normalfont,
postheadspace=4pt,
qed=\mbox{\smaller[4]$\boxtimes$}
]{claimproofstyle}



\crefname{subsection}{Subsection}{Subsections}

\crefformat{footnote}{#2\footnotemark[#1]#3}

\crefrangeformat{enumi}{#3#1#4--#5#2#6}


\theoremstyle{plain}

\usepackage{subfiles}

\usepackage{mdframed}
\newmdenv[
leftmargin = 1cm,
rightmargin = 0pt,
skipabove = 8pt,
skipbelow = 3pt,
innerleftmargin = 8pt,
innertopmargin = 0pt,
innerbottommargin = 0pt,
innerrightmargin = 0pt,
linewidth = 3pt,
topline = false,
rightline = false,
bottomline = false
]{leftbar}


\definecolor{gris}{RGB}{90,90,90}

\definecolor{vert}{RGB}{7,126,26}

\definecolor{purple}{RGB}{116,0,159}


\makeatletter
\def\@settitle{\begin{center}%
		\baselineskip14\p@\relax
		\bfseries
		\uppercasenonmath\@title
		\@title
		\ifx\@subtitle\@empty\else
		\\[1ex]\uppercasenonmath\@subtitle
		\footnotesize\mdseries\@subtitle
		\fi
	\end{center}%
}
\def\subtitle#1{\gdef\@subtitle{#1}}
\def\@subtitle{}
\makeatother

\geometry{margin=.9in}

\topmargin -1.2cm
\textheight 9.35in

\pagestyle{plain}

\title{A descriptive set theorist's proof of the pointwise ergodic theorem}
\author{Anush Tserunyan}
\address[Anush Tserunyan]{Department of Mathematics, University of Illinois at Urbana-Champaign, IL, 61801, USA}
\email{anush@illinois.edu}
\thanks{The author's research was partially supported by NSF Grant DMS-1501036.}



\makeatletter


\newcommand{\Lone}[1]{\|#1\|_{_1}}
\newcommand{\fbar}{\bar{f}}
\newcommand{\tS}{\tilde{S}}

\newcommand{\meanf}{A_f}

\newcommand{\supf}{\overline{f}}
\newcommand{\inff}{\underline{f}}

\begin{document}

\begin{abstract}
	We give a short combinatorial proof of the classical pointwise ergodic theorem for probability measure preserving $\Z$-actions \cite{Birkhoff:ergodic_theorem}. Our approach reduces the theorem to a tiling problem: tightly tile each orbit by intervals with desired averages. This tiling problem is easy to solve for $\Z$ with intervals as tiles. However, it would be interesting to find other classes of groups and sequences of tiles for which this can be done, since then our approach would yield a pointwise ergodic theorem for such classes.
\end{abstract}

\
\vspace{-1em}
\maketitle

Let $(X,\mu)$ be a standard probability space and $f \in L^1(X,\mu)$. For a finite nonempty $U \subseteq X$, put
\[
\meanf[U] \defeq \frac{1}{|U|} \sum_{y \in U} f(y),
\]
and for a finite equivalence relation $F$ on $X$, define $\meanf[F] : X \to \R$ by $\meanf[F](x) \defeq \meanf\clintbig{[x]_F}$.

\begin{lemma}[Finite averages]\label{fin_mean}
	For any measure-preserving finite equivalence relation $F$ on $(X,\mu)$, 
	\[
	\int f d\mu = \int \meanf[F] \, d \mu.
	\]
\end{lemma}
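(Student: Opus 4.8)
The plan is to treat $\meanf[F]$ as nothing more than a redistribution of the values of $f$ inside each finite $F$-class, and to extract the identity by integrating the \emph{same} class-sum in two different ways, using the invariance of $\mu$. First I would reduce to classes of a fixed size: for $n \ge 1$ set $X_n \defeq \{x \in X : |[x]_F| = n\}$, a Borel $F$-invariant set. These partition $X$ (every class is finite), and $\meanf[F]$ on $X_n$ is determined by the values of $f$ on $X_n$ alone, so by countable additivity it suffices to prove the identity on each $X_n$. I would also split $f = f^+ - f^-$ and argue each nonnegative part separately, so that all sums and integrals below interchange freely by Tonelli; finiteness of $\int \meanf[F]\,d\mu$ for $f \in L^1$ then drops out of the nonnegative computation applied to $|f|$, since $|\meanf[F]| \le \mean{|f|}[F]$ pointwise and $\int \mean{|f|}[F]\,d\mu = \int |f|\,d\mu$.

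Fixing $n$, I would enumerate each class in a Borel way. Identifying $X$ with a Borel subset of $\R$ gives a Borel linear order; let $T_n \subseteq X_n$ be the Borel set of minima of the $n$-element classes, and for $1 \le i \le n$ let $\phi_i : T_n \to X_n$ send $t$ to the $i$-th element of $[t]_F$. Then $\phi_1 = \mathrm{id}_{T_n}$, each $\phi_i$ is a Borel injection whose graph lies in $F$, and the images $\phi_1(T_n), \dots, \phi_n(T_n)$ partition $X_n$. Measure-preservation of $F$ says precisely that each such partial $F$-bijection preserves $\mu$, i.e.\ $\phi_i$ pushes $\mu$ restricted to $T_n$ forward to $\mu$ restricted to $\phi_i(T_n)$, so change of variables gives, for $f \ge 0$,
\[
\int_{X_n} f\,d\mu = \sum_{i=1}^n \int_{\phi_i(T_n)} f\,d\mu = \int_{T_n} \sum_{i=1}^n f(\phi_i(t))\,d\mu(t).
\]
On the other hand $\meanf[F]$ is $F$-invariant with $\meanf[F](\phi_i(t)) = \tfrac{1}{n}\sum_{j=1}^n f(\phi_j(t))$ independent of $i$, so the same change of variables yields
\[
\int_{X_n} \meanf[F]\,d\mu = \sum_{i=1}^n \int_{T_n} \tfrac{1}{n}\sum_{j=1}^n f(\phi_j(t))\,d\mu(t) = \int_{T_n} \sum_{j=1}^n f(\phi_j(t))\,d\mu(t),
\]
which is the same quantity.

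The only genuinely load-bearing step is this change of variables, that is, reading ``measure-preserving'' as the statement that every partial Borel bijection with graph in $F$ preserves $\mu$; everything else is bookkeeping. Conceptually the identity is an instance of the symmetric-measure (mass transport) principle for $F$, applied to the weight $w(x,y) = f(y)/|[x]_F|$: summing $w(x,y)$ over $y\,F\,x$ recovers $\meanf[F](x)$, while summing $w(y,x)$ recovers $f(x)$ because $|[y]_F| = |[x]_F|$ whenever $y\,F\,x$. In a final write-up I might prefer to isolate that principle and derive the lemma in a single line. I do not expect a serious obstacle beyond making the Borel enumeration and the precise meaning of the invariance hypothesis explicit; finiteness of the classes is exactly what guarantees that the transversal $T_n$ exists and that the two class-sums literally coincide.
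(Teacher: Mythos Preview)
Your proof is correct and follows essentially the same approach as the paper: reduce to classes of fixed size $n$, pick a Borel transversal, enumerate each class by measure-preserving bijections, and compute both integrals as the same class-sum over the transversal. The only cosmetic difference is that the paper packages the enumeration as a single Borel automorphism $T$ inducing $F$ (so the class at $x$ is $\{T^i x : i < n\}$), whereas you use $n$ partial bijections $\phi_i$ off a transversal obtained from a Borel linear order; your extra care with $f^\pm$ and the mass-transport remark are sound but not needed for the argument.
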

\begin{proof}
	For each $n \in \N$, restricting to the part of $X$ where each $F$-class has size $n$, we may assume $X$ is that part to begin with. Because each $F$-class is finite, there is a Borel automorphism\footnote{By the Luzin--Novikov countable section uniformization theorem \cite{bible}*{18.10}.\label{footnote:Luzin-Novikov}} $T$ that induces $F$ and a Borel $F$-transversal\footnote{A set that meets every $F$-class at exactly one point.} $B \subseteq X$. Using the invariance of $\mu$, we deduce
	\[
		\displaystyle
		\int_X f(x) d \mu(x)
		= 
		\int_B \sum_{i < n} f(T^i x) d \mu(x)
		=
		\int_B \sum_{i < n} \meanf[F](T^i x) \, d \mu(x)
		=
		\int_X \meanf[F](x) \, d \mu(x)
		.
		\qedhere
	\]
\end{proof}

Let $T$ be an aperiodic automorphism of $(X,\mu)$ and let $\le_T$ denote the induced partial order on $X$, i.e.
$
x \le_T y \defequiv \exists n \in \N \; T^n x = y.
$
For $x, y \in X$, put
$
(x, y)_T \defeq \set{z \in X : x <_T z <_T y}
$
and call the sets of this form \emph{$T$-intervals}; also, define $[x,y)_T$ and $(x,y]_T$ expectedly. We say that subset $S$ of a $T$-orbit is \emph{bi-infinite} if it has not minimum or maximum with respect to $\le_T$. Furthermore, we say that $S$ has a gap bigger than $L \in \N$ if there are $x,y \in S$ with $(x,y)_T \cap S = \0$ and $|[x,y)_T| \ge L$. For $n \in \N$, put $\meanf[T,n](x) \defeq \meanf\clintbig{[x, T^n x)_T}$.

\medskip

We first prove the classical pointwise ergodic theorem for ergodic actions to convey the main idea and then prove the general version with conditional expectation afterwards.

\subsection{For ergodic $\Z$-actions}

\begin{lemma}[Complete sections with large gaps]\label{complete-sections_with_large_gaps}
	If $T$ is ergodic, then for each $L \in \N$, there is a Borel set $S$ of arbitrarily small measure that is disjoint from $\bigcup_{i = 1}^{L} T^{-i} S$ yet meets a.e. orbit in a bi-infinite set.
\end{lemma}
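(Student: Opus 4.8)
The plan is to realize $S$ as the base of a Rokhlin tower, whose height simultaneously controls the gap size and, through disjointness of the levels, the measure. Recall Rokhlin's lemma (provable from aperiodicity alone, with no appeal to the ergodic theorem): for every $m \in \N$ and $\delta > 0$ there is a Borel set $B$ with $B, TB, \ldots, T^{m} B$ pairwise disjoint and $\mu\big(X \setminus \bigcup_{i \le m} T^i B\big) < \delta$. I would apply this with $m \ge L$ and set $S \defeq B$.

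First I would verify the gap condition. Pairwise disjointness of $B, \ldots, T^m B$ gives $S \cap T^{-i} S = \emptyset$ for $1 \le i \le m$; since $m \ge L$, in particular $S$ is disjoint from $\bigcup_{i=1}^{L} T^{-i}S$, which says exactly that consecutive points of $S$ along any orbit are more than $L$ apart. That same disjointness, together with measure-preservation, forces $(m+1)\mu(S) \le 1$, so $\mu(S) \le 1/(m+1)$; hence by taking $m$ large we drive $\mu(S)$ below any prescribed $\varepsilon$, yielding the ``arbitrarily small measure'' clause for free. So both the gap clause and the measure clause reduce to a single choice of large $m$.

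The substantive point is that $S$ meets a.e.\ orbit in a \emph{bi-infinite} set, and this is where ergodicity enters. Since $S$ has positive measure, I would invoke Poincar\'e recurrence: a.e.\ point of $S$ returns to $S$ under a positive power of $T$, and upon iterating, a.e.\ point of $S$ returns infinitely often. Thus the set $R^{+} \defeq \set{x \in X : T^k x \in S \text{ for infinitely many } k \ge 0}$ has $\mu(R^+) \ge \mu(S) > 0$. The key observation is that $R^+$ is $T$-invariant: whether infinitely many forward iterates of a point land in $S$ is unchanged when we pass to $Tx$ or to $T^{-1}x$. So ergodicity upgrades $\mu(R^+) > 0$ to $\mu(R^+) = 1$, meaning a.e.\ orbit meets $S$ with no $\le_T$-maximum. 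Running the identical argument for the (also ergodic and measure-preserving) automorphism $T^{-1}$ gives no $\le_T$-minimum, and together these say that $S$ meets a.e.\ orbit in a bi-infinite set.

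I expect the only genuine subtlety to be this last step, specifically securing bi-infiniteness in \emph{both} directions for a.e.\ orbit rather than merely a complete section or one-sided recurrence; everything else falls out of one application of Rokhlin's lemma. A purely descriptive alternative would replace Rokhlin's lemma by the Borel marker lemma to manufacture a Borel complete section with all gaps exceeding $m$, after which the measure bound $\mu(S) \le 1/(m+1)$ and the recurrence-plus-ergodicity upgrade to bi-infiniteness would proceed verbatim.
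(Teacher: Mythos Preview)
Your argument is correct, but it is organized quite differently from the paper's. The paper does not invoke Rokhlin's lemma: it simply picks any Borel $S_0$ of positive measure less than $1/L$ (which, by ergodicity, already meets a.e.\ orbit), observes that $\bigcup_{i=1}^{L} T^{-i} S_0$ has measure $<1$ so $S_0$ must have gaps bigger than $L$ on a positive (hence, by ergodicity, full) set of orbits, and sets $S \defeq S_0 \setminus \bigcup_{i=1}^{L} T^{-i} S_0$. For bi-infiniteness the paper does not use Poincar\'e recurrence either; instead it argues descriptively: on any orbit where $S$ fails to be bi-infinite one can Borel-select an extremal point (via Luzin--Novikov), yielding a partial Borel transversal whose $T$-translates are disjoint, which forces the bad set to be null. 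Your route packages the gap and small-measure conditions into one Rokhlin tower and then upgrades Poincar\'e recurrence to a full-measure statement via ergodicity; this is tidy and avoids the uniformization theorem, at the price of importing Rokhlin's lemma as a black box. The paper's route is more bare-handed on the dynamics side but leans on Luzin--Novikov, a dependence the author explicitly flags in the closing remark. Your final sentence about the marker lemma is essentially what the paper does in the aperiodic version of this lemma later on.
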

\begin{proof}
	Let $S_0$ be a Borel set of positive measure less than $\frac{1}{L}$, which hence meets a.e. orbit, by ergodicity. Because $\bigcup_{i = 1}^{L} T^{-i} S_0$ has measure less than $1$, $S_0$ must have gaps bigger than $L$ in a positive measure set of orbits, which ergodicity again turns into a.e. orbit. Therefore, $S \defeq S_0 \setminus \bigcup_{i = 1}^{L} T^{-i} S_0$ still meets a.e. orbit and is thus of positive measure. The part of $X$ where $S$ is not bi-infinite is null because we can choose a point in each of those orbits in a Borel fashion\cref{footnote:Luzin-Novikov}.
\end{proof}

\begin{theorem}[Pointwise ergodic for ergodic actions]
	For ergodic $T$, $\displaystyle \lim_{n \to \w} \meanf[T,n](x) = \int f \, d\mu$ a.e.
\end{theorem}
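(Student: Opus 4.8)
The plan is to reduce the two-sided statement to a single inequality and then prove that inequality by tiling a.e.\ orbit with $T$-intervals of controlled average, using Lemma~\ref{fin_mean} to turn the tile averages into a bound on $\int f\,d\mu$.

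\emph{Reduction.} First I would record that $\supf \defeq \limsup_n \meanf[T,n]$ and $\inff \defeq \liminf_n \meanf[T,n]$ are $T$-invariant: since $\meanf[T,n](Tx) - \meanf[T,n](x) = \tfrac1n\big(f(T^nx)-f(x)\big)$ and $\tfrac1n f(T^nx)\to 0$ a.e.\ (Borel--Cantelli, using $\sum_n \mu\set{|f| > n\epsilon} \le \epsilon^{-1}\Lone f$), both are unchanged along orbits. By ergodicity each is a.e.\ constant, say $\supf \equiv \bar c \ge \underline c \equiv \inff$. Replacing $f$ by $-f$ interchanges the roles of $\supf$ and $\inff$, so it suffices to prove $\bar c \le \int f\,d\mu$ for every $f$; applying this to $-f$ gives $\underline c \ge \int f\,d\mu$, and then $\int f \le \underline c \le \bar c \le \int f$ forces $\supf = \inff = \int f$ a.e., which is the theorem.

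\emph{Tiling.} Fix $\epsilon, \delta > 0$. Since $\sup_n \meanf[T,n] \ge \supf = \bar c$ a.e., I can choose $N$ so large that $G \defeq \set{x : \max_{1\le n\le N}\meanf[T,n](x) > \bar c - \epsilon}$ has $\mu(G) > 1-\delta$; on $G$ let $\tau(x)$ be the least $n \le N$ witnessing this, so that $[x, T^{\tau(x)}x)_T$ has average $> \bar c - \epsilon$. Now apply Lemma~\ref{complete-sections_with_large_gaps} with $L = N$ to get a Borel set $S$ with $\mu(S) < \delta/N$ that is disjoint from $\bigcup_{i=1}^N T^{-i}S$ and bi-infinite in a.e.\ orbit. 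The points of $S$ cut a.e.\ orbit into consecutive blocks of length $> N$, and inside each block I tile greedily from left to right: at the current point $y$, if $y \in G$ and $[y, T^{\tau(y)}y)_T$ still fits inside the block I make it a tile and jump to its right endpoint, otherwise I make $\set{y}$ a singleton and advance one step. Let $F$ be the resulting (Borel, finite, measure-preserving) equivalence relation whose classes are these tiles and singletons.

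\emph{Estimate and main obstacle.} The key point is that the ``bad'' set $R$ of points lying in a singleton is small: a point fails to get a good tile only if it lies outside $G$, or if it lies within $N$ steps to the left of the block's right endpoint (so its good interval overshoots the next point of $S$); as blocks have length $> N$, the latter set is exactly $\bigcup_{i=1}^N T^{-i}S$, of measure $\le N\mu(S) < \delta$. Hence $\mu(R) < 2\delta$, while $\meanf[F] > \bar c - \epsilon$ on $X \setminus R$ and $\meanf[F] = f$ on $R$, so Lemma~\ref{fin_mean} gives
\[
\int f\,d\mu = \int \meanf[F]\,d\mu \ge (\bar c - \epsilon)\,\mu(X\setminus R) - \int_R |f|\,d\mu.
\]
Letting $\delta \to 0$ (so $\mu(X\setminus R)\to 1$ and $\int_R|f|\to 0$ by absolute continuity of the integral; this also forces $\bar c<\infty$) yields $\int f \ge \bar c - \epsilon$, and then $\epsilon \to 0$ gives $\int f \ge \bar c$, as required. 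The crux is precisely the truncation to a bounded horizon $N$ together with the large-gap section $S$: without bounding tile lengths the greedy step could overshoot by an uncontrolled amount, and without $S$ the naive ``sunrise'' tiling degenerates, since $\limsup_n \meanf[T,n] = \bar c$ makes the whole orbit one good block admitting no finite tiles.
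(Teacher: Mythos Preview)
Your argument is correct and follows essentially the same route as the paper: truncate to a bounded horizon, invoke Lemma~\ref{complete-sections_with_large_gaps} to cut a.e.\ orbit into long blocks, greedily tile each block with intervals of good average, and feed the resulting finite equivalence relation into Lemma~\ref{fin_mean}. The differences are cosmetic---you argue directly that $\bar c \le \int f$ whereas the paper subtracts $\int f$ and argues by contradiction, and you take the \emph{least} witnessing $n\le N$ where the paper takes the largest---so the two proofs are the same in substance.
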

\begin{proof}
	Replacing $f$ with $f - \int f d\mu$, we may assume that $\int f d\mu = 0$. We show that $\bar{f} \defeq \displaystyle\limsup_{n \to \w} \meanf[T,n] \le 0$ a.e. and an analogous argument shows that $\displaystyle \liminf_{n \to \w} \meanf[T,n] \ge 0$ a.e. 
	
	Because $\fbar$ is $T$-invariant, ergodicity implies that it is some constant $c$ a.e. Suppose towards a contradiction that $c > 0$ and put $\de \defeq \frac{c}{2}$. Take $\e \defeq \frac{\min(\de,1)}{8}$ and let $L$ be large enough so that the set 
	\[
	Z \defeq \setbig{x \in X : \meanf[T,n](x) < \de \; \text{for all integers $n \in [1,L]$}}
	\] 
	supports less than $\e$ of the total masses of $1$ and $f$, i.e. $\Lone{\1_Z} + \Lone{f \cdot \1_Z} < \e$. Define a function $\ell : X \to \N$ by mapping $x$ to the largest $n \le L$ such that $\meanf[T,n](x) \ge \de$, if $x \notin Z$, and to $1$, otherwise. For each $x \in X$, put $I_x \defeq [x, T^{\ell(x)} x)_T$, and say that a $T$-interval $I \defeq [y,z)_T$ is \emph{tiled} if it admits a partition (\emph{tiling}) into $T$-intervals of the form $I_x$. It follows by induction on the length of $I$ that such a partition is unique because $I_y$ has to be the tile containing $y$.
	
	Let $S$ be given by \cref{complete-sections_with_large_gaps} applied to $L$. Because $T$ is measure-preserving, we can take $S$ small enough so that the set $\tS \defeq \bigcup_{i = 1}^{L} T^{-i} S$ supports less than $\e$ of the total mass of $1$ and $f$, i.e. $\Lone{\1_{\tS}} + \Lone{f \cdot \1_{\tS}} < \e$. For each $x \in X$, denote by $s(x)$ the closest element of $S$ to the left of $x$, i.e. $s(x) \in S$, $s(x) \le_T x$, and $\big(s(x), x\big]_T \cap S = \0$. Define a partial finite equivalence relation $F$ on $X$ as follows:
	\[
	x F y \defequiv \exists z \notin \tS \text{ such that } x,y \in I_z \text{ and } \big[s(z), z\big)_T \text{ is tiled}.
	\]
	It is clear that $Y \defeq \dom(F) \supseteq X \setminus (\tS \cup Z)$, so $\int_{X \setminus Y} d \mu + \int_{X \setminus Y} |f| d \mu < 2\e$. Also, for each $y \in Y$, $[y]_F = I_z$ for some $z \in X \setminus Z$, so $\meanf[F](y) \ge \de$. Thus, \cref{fin_mean} implies a contradiction:
	\[
	0 = \int_X f d\mu \ge \int_Y f d\mu - 2\e = \int_Y \meanf[F] d\mu - 2\e \ge \de (1 - 2\e) - 2\e \ge \frac{\de}{2} > 0.
	\qedhere
	\]
\end{proof}

\subsection{For general $\Z$-actions}

Now let $T$ be any measurable preserving automorphism. For a set $S \subseteq X$, denote by $[S]_T$ its \emph{$T$-saturation}, i.e. $[S]_T \defeq \bigcup_{n \in \Z} T^n S$.

\begin{lemma}[Approximate complete sections with large gaps]\label{a.complete-sections_with_large_gaps}
	If $T$ is aperiodic, then for each $\e > 0$ and $L \in \N$, there is a Borel set $S$ of arbitrarily small measure that is disjoint from $\bigcup_{i = 1}^{L} T^{-i} S$ yet $[S]_T$ has measure $1 - \e$.
\end{lemma}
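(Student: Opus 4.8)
The plan is to reduce everything to the measure-theoretic Rokhlin lemma, which applies precisely because $T$ is aperiodic and measure-preserving, and then simply take $S$ to be the base of a sufficiently tall Rokhlin tower. The loss of ergodicity (compared with \cref{complete-sections_with_large_gaps}) is exactly what forces us to settle for a saturation of measure $1 - \e$ instead of a set meeting a.e.\ orbit, and it is the Rokhlin tower that supplies a complete section uniformly across the (now nontrivial) ergodic components.

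Concretely, fix the given $\e$ and $L$ together with a target bound $\eta > 0$ on $\mu(S)$, and choose an integer height $N$ with $N > L$ and $N > 1/\eta$. The Rokhlin lemma produces a Borel set $B$ such that the levels $B, TB, \dots, T^{N-1}B$ are pairwise disjoint and their union $U \defeq \bigcup_{i = 0}^{N-1} T^i B$ has measure greater than $1 - \e$. I would then set $S \defeq B$ and verify the three requirements, all of which are immediate. First, since the $N$ translates of $B$ are pairwise disjoint and of equal measure, $\mu(S) = \mu(B) \le 1/N < \eta$, so $S$ has arbitrarily small measure. Second, for each $1 \le i \le L \le N - 1$ the sets $B$ and $T^i B$ are distinct levels of the tower, hence disjoint; applying $T^{-i}$ gives $S \cap T^{-i} S = B \cap T^{-i} B = \0$, so $S$ is disjoint from $\bigcup_{i = 1}^{L} T^{-i} S$. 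Third, every point of $U$ lies in some $T^i B$ and so shares its orbit with a point of $B$; thus $U \subseteq [B]_T = [S]_T$, whence $[S]_T$ has measure greater than $1 - \e$.

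I do not expect a genuine obstacle once the Rokhlin lemma is in hand: the base of a tall enough tower is automatically $L$-separated (its translates up to height $N-1$ are disjoint), while its $T$-saturation swallows the entire tower. The only point needing a little care is juggling the two quantitative demands simultaneously — that $\mu(S)$ be small and that $[S]_T$ be large — but these are controlled by the two independent parameters $N$ and $\e$ of the lemma and do not interfere with one another. The contrast with the ergodic case is instructive: there one could inflate a set of small positive measure to one meeting a.e.\ orbit, whereas here the tower construction is what replaces that inflation.
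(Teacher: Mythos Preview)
Your argument is correct: the base of a Rokhlin tower of height $N$ with $N>L$ and $N>1/\eta$ is automatically $L$-separated, has measure at most $1/N<\eta$, and its $T$-saturation contains the whole tower, hence has measure exceeding $1-\e$.

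The paper takes a different route. Instead of the Rokhlin lemma, it invokes the \emph{marker lemma} to obtain a Borel complete section $S_0$ of measure less than $\e/L$, and then sets $S \defeq S_0 \setminus \bigcup_{i=1}^{L} T^{-i} S_0$. The point is that $\bigcup_{i=1}^{L} T^{-i} S_0$ has measure below $\e$, so $S_0$ has gaps larger than $L$ on a set of orbits of measure exceeding $1-\e$, and $S$ still meets all those orbits. Your approach is arguably more direct --- the tower already has the gap structure built in, so no subtraction is needed --- and is the argument a classical ergodic theorist would reach for. The paper's choice of the marker lemma fits its descriptive-set-theoretic theme: the marker lemma is a purely Borel statement requiring no measure at all, whereas the Rokhlin lemma is inherently measure-theoretic. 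A small technical remark in that spirit: the statement asks for a \emph{Borel} $S$, so you should either cite a Borel form of the Rokhlin lemma or note that in a standard probability space the measurable base can be adjusted on a null set to a Borel one without disturbing the disjointness of the levels.
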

\begin{proof}
	By the marker lemma \cite{Kechris-Miller}*{Lemma 6.7}, there is a Borel complete $T$-section $S_0$ of arbitrarily small measure, in particular, smaller than $\frac{\e}{L}$. Because $\bigcup_{i = 1}^{L} T^{-i} S_0$ has measure less than $\e$, $S_0$ must have gaps bigger than $L$ in a set of orbits of measure larger than $1 - \e$. Therefore, the $T$-saturation of $S \defeq S_0 \setminus \bigcup_{i = 1}^{L} T^{-i} S_0$ has measure larger than $1-\e$.
\end{proof}

\begin{theorem}[Pointwise ergodic for general actions]
	$\displaystyle \lim_{n \to \w} \meanf[T,n](x)$ exists a.e. and is equal to the conditional expectation of $f$ with respect to the $\si$-algebra of $T$-invariant measurable sets.
\end{theorem}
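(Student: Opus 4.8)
The plan is to reduce to the ergodic argument by localizing to a single invariant set. Let $\mathcal I$ denote the $\si$-algebra of $T$-invariant measurable sets and let $g$ be the conditional expectation of $f$ with respect to $\mathcal I$. Since $g$ is $\mathcal I$-measurable, it is $T$-invariant, so its averages satisfy $A_g[T,n] = g$ for every $n$; writing $h \defeq f - g$ we get $\meanf[T,n] = A_h[T,n] + g$. Thus it suffices to show that $A_h[T,n] \to 0$ a.e., and since $\int_A h \, d\mu = 0$ for every $A \in \mathcal I$ (exactly the defining property of the conditional expectation, as $\int_A g \, d\mu = \int_A f \, d\mu$), I may replace $f$ by $h$ and assume henceforth that $f$ has zero conditional expectation, i.e.\ $\int_A f \, d\mu = 0$ for every $T$-invariant $A$.

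As in the ergodic case, I would establish $\fbar \defeq \limsup_{n \to \w} \meanf[T,n] \le 0$ a.e., the symmetric argument giving $\liminf_{n \to \w} \meanf[T,n] \ge 0$ a.e.; together these force $\meanf[T,n] \to 0$ a.e., i.e.\ (restoring $g$) $\meanf[T,n] \to g$, as required. Suppose towards a contradiction that $\mu\set{\fbar > 0} > 0$. Since $\fbar$ is $T$-invariant, so is each set $\set{\fbar > \de}$, and for some $\de > 0$ the set $A \defeq \set{\fbar > \de}$ has positive measure. Being $T$-invariant, $A$ satisfies $\int_A f \, d\mu = 0$, while $\fbar > \de$ everywhere on it.

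Now I would run the tiling argument of the ergodic proof essentially verbatim, but inside the standard probability space $(A, \mu|_A)$, on which $T$ restricts to an aperiodic measure-preserving automorphism. Only two things change. First, the role of the hypothesis $\int_X f \, d\mu = 0$ is now played by $\int_A f \, d\mu = 0$. Second, lacking ergodicity, I draw the section from \cref{a.complete-sections_with_large_gaps} rather than its ergodic counterpart: instead of meeting a.e.\ orbit, it only provides a Borel $S \subseteq A$, disjoint from $\bigcup_{i=1}^{L} T^{-i} S$, with $\mu(A \setminus [S]_T) < \e$. The data $\ell$, $I_x$, the bad set $Z$, the set $\tS \defeq \bigcup_{i=1}^{L} T^{-i} S$, the nearest-marker map $s$, and the partial finite equivalence relation $F$ are defined exactly as before, except that now $Y \defeq \dom(F) \supseteq A \setminus \big(\tS \cup Z \cup (A \setminus [S]_T)\big)$, so we incur the extra loss $A \setminus [S]_T$, whose measure and $f$-mass have been arranged to be $< \e$. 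Each $F$-class is again a tile of $f$-average $\ge \de$, so \cref{fin_mean} gives
\[
0 = \int_A f \, d\mu \ge \int_Y \meanf[F] \, d\mu - 3\e \ge \de\,\mu(Y) - 3\e \ge \de\big(\mu(A) - 3\e\big) - 3\e > 0
\]
as soon as $\e$ is small relative to $\de\,\mu(A)$ --- the desired contradiction.

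The main obstacle is exactly this passage from a genuine to an approximate complete section. Since \cref{a.complete-sections_with_large_gaps} does not assert that $S$ is bi-infinite on the orbits it meets, I must check separately that the nearest-marker map $s(x)$ is defined for a.e.\ $x \in [S]_T$; this I expect to get from Poincar\'e recurrence for $T^{-1}$, which makes $S$ left-infinite on a.e.\ orbit meeting it, together with the fact that the $T$-saturation of a $\mu$-null set is $\mu$-null. Everything else is bookkeeping: the error terms must be measured relative to $\mu(A)$ rather than $1$, and the uncovered piece $A \setminus [S]_T$ must be folded into the error budget --- both harmless, since \cref{a.complete-sections_with_large_gaps} lets me shrink them at will.
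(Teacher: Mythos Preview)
Your proposal is correct and relies on the same tiling machinery as the paper, but the overall organization differs. You subtract the conditional expectation $g$ at the outset and then mimic the ergodic proof on the invariant level set $A = \{\fbar > \de\}$, deriving a contradiction from $\int_A (f - g)\,d\mu = 0$. The paper instead establishes the \emph{existence} of the limit first, without reference to the conditional expectation: it fixes reals $a < b$, works on the invariant set $\{\inff < a < b < \supf\}$, and runs the tiling argument twice---once for $\supf$ and $b$, once for $\inff$ and $a$---to sandwich $\int f\,d\mu$ between $b\,\mu(X) - O(\e)$ and $a\,\mu(X) + O(\e)$; only afterward does it identify the limit via Fatou and dominated convergence. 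Your route stays closer in spirit to the ergodic-case proof and needs only one tiling pass per inequality, at the cost of invoking the conditional expectation up front; the paper's route cleanly separates existence from identification. Your remark that Poincar\'e recurrence is needed to make $s(x)$ well defined on a.e.\ orbit meeting $S$ is apt---the paper is silent on this point.
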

\begin{proof}
	The part about the condition expectation of $f$ follows by a standard argument from the first part and the $T$-invariance of $\mu$. To this end, we first argue that $f_0 \defeq \displaystyle \lim_{n \to \w} \meanf[T,n](x)$ is integrable by Fatou's lemma and the $T$-invariance of $\mu$:
	\[
	\Lone{f_0} \le \liminf_{n \to \w} \int_X \frac{1}{n} \sum_{i = 0}^{n-1} |f(T^i x)| d\mu(x) = \liminf_{n \to \w} \frac{1}{n} \sum_{i = 0}^{n-1} \int_X |f(T^i x)| d\mu = \int_X |f| d\mu.
	\]
	Thus, for any $T$-invariant measurable set $A \subseteq X$, by the generalized dominated convergence theorem:
	\[
	\int_A f_0 d\mu = \liminf_{n \to \w} \frac{1}{n} \sum_{i = 0}^{n-1} \int_A f(T^i x) d\mu(x) = \int_A f d\mu.
	\]
	
	We now turn to the a.e. existence of the limit. This easily follows from \cref{fin_mean} for the part of $X$ where $T$ is periodic, so we assume that $T$ is aperiodic and show that 
	\[
	\supf \defeq \displaystyle\limsup_{n \to \w} \meanf[T,n] \le \displaystyle \liminf_{n \to \w} \meanf[T,n] \eqdef \inff \text{ a.e.}
	\]
	Suppose towards a contradiction that there are $a < b \in \R$ such that the set 
	\[
	X' \defeq \set{x \in X : \inff(x) < a < b < \supf(x)}
	\] 
	has positive measure. Because $X'$ is $T$-invariant, we assume, as we may, that $X' = X$.
	
	Fixing $\e > 0$ such that $(b - a) \mu(X) > 2 \e (|a| + |b| + 2)$, we first focus on $\supf$ and $b$. 
	
	Let $L$ be large enough so that the set 
	\[
	Z \defeq \set{x \in X : (\forall n, 1 \le n \le L)\; \meanf[T,n](x) < b}
	\] 
	supports less than $\e$ of the total masses of $1$ and $f$, i.e. $\Lone{\1_Z} + \Lone{f \cdot \1_Z} < \e$. Define a function $\ell : X \to \N$ by mapping $x$ to the largest $n \le L$ such that $\meanf[T,n](x) \ge b$, if $x \notin Z$, and to $1$, otherwise. For each $x \in X$, put $I_x \defeq [x, T^{\ell(x)} x)_T$, and say that a $T$-interval $I \defeq [y,z)_T$ is \emph{tiled} if it admits a partition (\emph{tiling}) into $T$-intervals of the form $I_x$. It follows by induction on the length of $I$ that such a partition is unique because $I_y$ has to be the tile containing $y$.
	
	Let $S$ be given by \cref{a.complete-sections_with_large_gaps} applied to $L$. Because $T$ is measure-preserving, we can take $S$ small enough so that the union of the sets $[S]_T^c$ and $\tS \defeq \bigcup_{i = 1}^{L} T^{-i} S$ supports less than $\e$ of the total mass of $1$ and $f$, i.e. $\Lone{\1_{[S]_T^c \cup \tS}} + \Lone{f \cdot \1_{[S]_T^c \cup \tS}} < \e$. For each $x \in X$, denote by $s(x)$ the closest element of $S$ to the left of $x$, i.e. $s(x) \in S$, $s(x) \le_T x$, and $\big(s(x), x\big]_T \cap S = \0$.
	
	Define a partial finite equivalence relation $F$ on $X$ as follows:
	\[
	x F y \defequiv \exists z \notin \tS \text{ such that } x,y \in I_z \text{ and } \big[s(z), z\big)_T \text{ is tiled}.
	\]
	It is clear that $Y \defeq \dom(F) \supseteq X \setminus ([S]_T^c \cup \tS \cup Z)$, so $\int_{X \setminus Y} d \mu + \int_{X \setminus Y} |f| d \mu < 2\e$. Also, for each $y \in Y$, $[y]_F = I_z$ for some $z \in X \setminus Z$, so $\meanf[F](y) \ge b$. Thus, using \cref{fin_mean}, we get
	\[
	\int_X f d\mu \ge \int_Y f d\mu - 2\e = \int_Y \meanf[F] d\mu - 2\e \ge b \mu(Y) - 2\e \ge b \mu(X) - 2\e (|b| + 1).
	\]
	
	An analogous argument for $\inff$ and $a$ gives $\int_X f d \mu \le a \mu(X) + 2 \e (|a| + 1)$, so $(b - a) \mu(X) \le 2 \e (|a| + |b| + 2)$, contradicting the choice of $\e$.
\end{proof}

\begin{remark}
	Another short proof of the pointwise ergodic theorem for $\Z$ is given by Keane and Petersen in \cite{Keane-Petersen:ergodic_thm}. The proof is analytic and has the advantage of not using any black box, whereas we do use the Luzin--Novikov uniformization theorem to keep the sets measurable\footnote{Although, instead, we could easily observe that these sets are in the $\si$-ideal generated by analytic sets and use that these sets are universally measurable.}. The tiling is implicitly present in Keane--Petersen proof, but without turning it into an equivalence relation, so it is not clear how to adapt their proof to other shapes of tiles in other groups.
		
	Our approach explicitly reduces the pointwise ergodic theorem to a tiling problem, which makes it interesting to consider this problem for other groups and sequences of tiles. If solved, our approach then would yield a pointwise ergodic theorem for those groups.
\end{remark}


\bigskip

\begin{bibdiv}
	\begin{biblist}
		\bibselect{"\LatexDef/refs"}
	\end{biblist}
\end{bibdiv}

\end{document}